\numberwithin{equation}{section}
\numberwithin{figure}{section}
\theoremstyle{plain}
\newtheorem{thm}{Theorem}[section]
 \theoremstyle{definition}
  \newtheorem{example}[thm]{Example}
\begin{document}

\title{A Note on Composition Operators in a Half-Plane}

\author{Hari Bercovici and Dan Timotin}

\subjclass[2000]{Primary: 47B33; Secondary: 47A45, 30H15}

\thanks{HB was supported in part by grants from the National Science Foundation.
DT was supported in part by a grant of the Romanian National Authority
for Scientific Research, CNCS--UEFISCDI, project number PN-II-ID-PCE-2011-3-0119. }

\address{HB: Department of Mathematics, Indiana University, Bloomington, IN
47405, USA}

\email{bercovic@indiana.edu}

\address{DT: Simion Stoilow Institute of Mathematics of the Romanian Academy,
PO Box 1-764, Bucharest 014700, Romania}

\email{Dan.Timotin@imar.ro}
\begin{abstract}
Conditions for a composition operator on the Hardy space of the disk
to have closed range or be similar to an isometry are well known.
We provide such conditions for composition operators on the Hardy
space of the upper half-plane. We also show that the operator of composition
with an analytic self-map $\Phi$ of the upper half-plane can be similar
to an isometry even when $\Phi$ is far from being an inner function.
\end{abstract}
\maketitle

\section{Introduction}

We denote by $\mathbb{C}^{+}=\{x+iy:y>0\}$ the upper half of the
complex plane $\mathbb{C}$, and by $H_{\mathbb{C}^{+}}^{2}$ the
corresponding Hardy space. Thus, an analytic function $u:\mathbb{C}^{+}\to\mathbb{C}$
belongs to $H_{\mathbb{C}^{+}}^{2}$ when\[
\|u\|_{2}^{2}=\sup_{y>0}\int_{-\infty}^{\infty}|u(x+iy)|^{2}\, dx<\infty.\]

Consider an analytic function $\Phi:\mathbb{C}^{+}\to\mathbb{C}^{+}$,
and use the notation\[
C_{\Phi}u=u\circ\Phi\]
when $u$ is defined on $\mathbb{C}^{+}$. The functions $\Phi$ for
which $C_{\Phi}$ is a bounded operator on $H_{\mathbb{C}^{+}}^{2}$
were characterized by Matache \cite{matache}. To explain his result,
we write $\Phi$ in Nevanlinna form:\begin{equation}
\Phi(z)=\alpha+\beta z+\int_{-\infty}^{\infty}\frac{1+tz}{t-z}\, d\rho(t),\quad z\in\mathbb{C}^{+},
\label{eq:Nevanlinna-rep}\end{equation}
where $\alpha\in\mathbb{R},\beta\ge0,$ and $\rho$ is a finite, positive
Borel measure on the real line $\mathbb{R}$. Then $C_{\Phi}$ is
a bounded operator on $H_{\mathbb{C}^{+}}^{2}$ if and only if $\beta>0$,
in which case, according to the results of Elliott and Jury \cite{elliott-jury}
(see also \cite{jury-norms} for related results), the norm, essential
norm, and spectral radius of $C_{\Phi}$ are equal to $\beta^{-1/2}$.
In this note we restrict ourselves to the case in which $\beta=1$,
so that $C_{\Phi}$ is a contraction. In this case, it is known that
$C_{\Phi}$ is an isometry if and only if $\rho$ is singular relative
to Lebesgue measure. This follows from the work of Letac \cite{letac-proc}
and Chalendar and Partington \cite{chal-part}.

Based on the work of Bayart \cite{bayart} and Nordgren \cite{nordgren}
in the unit disk, it may be reasonable to surmise along with Elliott
\cite{elliott-sim} that $C_{\Phi}$ is not similar to an isometry unless
it is already an isometry. This is however not correct: we show that
there exist functions $\Phi$ for which $\rho$ is absolutely continuous
and $C_{\Phi}$ is similar to an isometry. We also provide criteria
for $C_{\Phi}$ to have closed range in terms of an associated family
of probability measures on $\mathbb{R}$ which are the analogues of
the so-called Alexandrov-Clark measures studied in the context of
the unit disk. (These measures also appear in \cite{letac-mal} as
the transition measures of a Markov process.) Another criterion for
closed range is closely related with results of Cima, Thomson, and
Wogen \cite{cima-wogen}.

\section{Preliminaries\label{sec:Preliminaries}}

Consider an analytic function $\Phi:\mathbb{\mathbb{C}}^{+}\to\mathbb{C}^{+}$
given by the Nevanlinna representation

\begin{equation}
\Phi(z)=\alpha+z+\int_{-\infty}^{\infty}\frac{1+tz}{t-z}\, d\rho(t),\quad z\in\mathbb{C}^{+},
\label{eq:beta-0}\end{equation}
where $\alpha\in\mathbb{R}$ and $\rho$ is a finite, positive Borel
measure on $\mathbb{R}$.  It is well known that the limits\[
\Phi(x)=\lim_{y\downarrow0}\Phi(x+iy)\]
exist for almost every $x\in\mathbb{R}$, relative to Lebesgue measure.
Moreover, the measure $\rho$ is singular if and only if $\Phi(x)$
is real for almost every $x$. The result of Letac \cite{letac-proc}
mentioned in the introduction is as follows. We use the notation $|\sigma|$
for the Lebesgue measure of a Borel subset $\sigma\subset\mathbb{R}$.
\begin{thm}
\label{thm:letac}With the above notation, assume that $\rho$ is
singular relative to Lebesgue measure. Then the map $x\mapsto\Phi(x)$
is a measure preserving transformation of the real line. In other
words,\[
|\{x\in\mathbb{R}:\Phi(x)\in\sigma\}|=|\sigma|\]
for every Borel set $\sigma\subset\mathbb{R}$.
\end{thm}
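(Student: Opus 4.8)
The plan is to prove that the push-forward of Lebesgue measure under the (almost everywhere real-valued) boundary map $\Phi$ is again Lebesgue measure, and to detect this by integrating Poisson kernels. Since $\rho$ is singular, $\Phi(x)$ is real for almost every $x$, so $\mu:=\Phi_*(dx)$, defined by $\mu(\sigma)=|\{x:\Phi(x)\in\sigma\}|$, is a Borel measure on $\mathbb{R}$, and the assertion is exactly $\mu=m$, where $m$ is Lebesgue measure. Writing $P_v(s)=\frac1\pi\frac{v}{s^2+v^2}$ for the Poisson kernel and $w=u+iv$, it suffices to establish
\[
\int_{\mathbb{R}}P_v(\Phi(x)-u)\,dx=1\qquad(w\in\mathbb{C}^+).
\]
Indeed, the left-hand side equals $\int_{\mathbb{R}}P_v(s-u)\,d\mu(s)$ by the change-of-variables formula, while $\int_{\mathbb{R}}P_v(s-u)\,dm(s)=1$; once the displayed identity is known, $\mu$ has a finite Poisson integral that agrees with that of $m$, and since convolution with $P_v$ is injective on positive measures this forces $\mu=m$.

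To compute the integral I would introduce, for fixed $w\in\mathbb{C}^+$, the function $N_w(z)=\frac{1}{\bar w-\Phi(z)}$. Because $\Phi(z)\in\mathbb{C}^+$ and $\bar w\in\mathbb{C}^-$, the denominator lies in $\mathbb{C}^-$ and never vanishes, so $N_w$ is analytic on $\mathbb{C}^+$ and maps it into $\mathbb{C}^+$; that is, $N_w$ is itself a Nevanlinna function. I would then record two facts. First, along the imaginary axis $\Phi(iy)/(iy)\to1$ as $y\to\infty$ (a routine dominated-convergence estimate on the Nevanlinna integral, using only finiteness of $\rho$, which recovers $\beta=1$), so that $N_w(z)\sim -1/z$ at infinity; hence $N_w$ has no linear term and tends to $0$, and is therefore the Cauchy transform $N_w(z)=\int_{\mathbb{R}}\frac{d\sigma_w(t)}{t-z}$ of a positive measure $\sigma_w$ with total mass $\sigma_w(\mathbb{R})=\lim_{y\to\infty}y\,\operatorname{Im}N_w(iy)=1$. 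Second, using only $\operatorname{Im}\Phi\ge0$, for every $z\in\mathbb{C}^+$
\[
\operatorname{Im}N_w(z)=\frac{v+\operatorname{Im}\Phi(z)}{|\bar w-\Phi(z)|^2}\le\frac{v+\operatorname{Im}\Phi(z)}{\bigl(v+\operatorname{Im}\Phi(z)\bigr)^2}=\frac{1}{v+\operatorname{Im}\Phi(z)}\le\frac1v.
\]

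The conclusion then follows quickly. The function $\frac1\pi\operatorname{Im}N_w$ is the Poisson integral of $\sigma_w$, and by the last display it is uniformly bounded; since the singular part of $\sigma_w$ must be carried on the set where this Poisson integral tends to $+\infty$, that set is empty and $\sigma_w$ is purely absolutely continuous, with boundary density $\frac1\pi\operatorname{Im}N_w(x)$ and total mass $1$. Finally, because $\Phi(x)$ is real almost everywhere, the boundary value is $\operatorname{Im}N_w(x)=\frac{v}{(\Phi(x)-u)^2+v^2}=\pi P_v(\Phi(x)-u)$, whence
\[
\int_{\mathbb{R}}P_v(\Phi(x)-u)\,dx=\frac1\pi\int_{\mathbb{R}}\operatorname{Im}N_w(x)\,dx=\sigma_w(\mathbb{R})=1,
\]
which is the required identity.

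The main obstacle is the step identifying $\sigma_w$ as purely absolutely continuous with boundary density exactly $\frac1\pi\operatorname{Im}N_w$: this rests on the Fatou-type theorem relating the absolutely continuous part of the boundary (Clark) measure of a Nevanlinna function to the boundary values of its imaginary part, together with the principle that a uniformly bounded Poisson integral precludes any singular mass. It is worth emphasizing where the hypothesis is used: the boundedness of $\operatorname{Im}N_w$ holds for every self-map $\Phi$, and it is only in the final display that the singularity of $\rho$ intervenes, through the fact that it makes $\Phi$ real almost everywhere and thereby identifies $\operatorname{Im}N_w(x)$ with $\pi P_v(\Phi(x)-u)$. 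The remaining points—the change-of-variables reduction, the asymptotics $\Phi(iy)\sim iy$, and the determination of a measure by its Poisson integral—are standard and should be dispatched without difficulty.
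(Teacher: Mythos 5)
The paper does not actually prove this theorem: it is quoted from Letac's article and later described as ``essentially equivalent'' to the Boole/Hru\v{s}\v{c}\"ev--Vinogradov identity (Theorem \ref{thm:hruscev}) via the family $G_{\tau}(z)=1/(\tau-\Phi(z))$ with \emph{real} parameter $\tau$, whose representing measures $\mu_{\tau}$ are singular exactly when $\rho$ is. Your argument is a correct, self-contained proof that takes a genuinely different route: you move the parameter off the real axis, considering $N_{w}(z)=1/(\bar w-\Phi(z))$ for $w\in\mathbb{C}^{+}$, so that the representing measure $\sigma_{w}$ is a probability measure whose Poisson integral $\tfrac1\pi\operatorname{Im}N_{w}$ is bounded by $1/(\pi v)$ and hence automatically absolutely continuous with density given by Fatou boundary values; singularity of $\rho$ enters only to make $\Phi$ real a.e.\ on $\mathbb{R}$, which identifies that density with $P_{v}(\Phi(x)-u)$ and yields $\int P_{v}(\Phi(x)-u)\,dx=\sigma_{w}(\mathbb{R})=1$, after which uniqueness in the Herglotz representation gives $\Phi_{*}(dx)=dx$. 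The steps you flag as needing care are all standard and hold as stated: $y|N_{w}(iy)|\le y/(y+v)$ gives the pure Cauchy-transform form with total mass $\lim_{y\to\infty}y\operatorname{Im}N_{w}(iy)=1$ (using $\Phi(iy)/iy\to1$), and the bound $P[\sigma_{w}](x+iy)\ge\sigma_{w}([x-y,x+y])/2\pi y$ converts the uniform bound into $\sigma_{w}(I)\le|I|/v$, excluding singular mass. What your approach buys is that it avoids the distribution-function identity $|\{G>y\}|=1/y$ for singular Cauchy transforms entirely, replacing it with the softer fact that Clark-type measures at non-real parameters are absolutely continuous; what Letac's route buys is the sharper quantitative information recorded in Theorems \ref{thm:hruscev} and \ref{thm:tsiriteli}, which the paper needs later anyway for the closed-range criterion.
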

This result implies, of course, that $C_{\Phi}$ is an isometry when
$\rho$ is singular. 

Assume that $\Phi$ is defined by (\ref{eq:beta-0}), and the functions $G_{\tau}:\mathbb{C}^{+}\to\mathbb{C}^{+}$
defined by\[
G_{\tau}(z)=\frac{1}{\tau-\Phi(z)},\quad z\in\mathbb{C}^{+},\tau\in\mathbb{R}.\]
The Nevanlinna representation (\ref{eq:Nevanlinna-rep}) applies to
$G_{\tau}$, but can be simplified because of the asymptotic properties
of this function. More precisely, for each $\tau\in\mathbb{R}$
there exists a Borel probability measure $\mu_{\tau}$ on $\mathbb{R}$
such that \[
G_{\tau}(z)=\int_{\infty}^{\infty}\frac{d\mu_{\tau}(t)}{t-z},\quad z\in\mathbb{C}^{+}.\]

Now, the fact that $\Phi(x)$ is real for almost every $x$ is equivalent
to saying that $G_{\tau}(x)$ is real for almost every $x$, and therefore
$\rho$ is singular if and only if $\mu_{\tau}$ is singular for every
$\tau\in\mathbb{R}$. The following result, essentially equivalent to Letac's
theorem, was proved by Hru\v s\v c\"ev and Vinogradov \cite{hru-vin}
for the Cauchy transforms of singular probability measures.
\begin{thm}
\label{thm:hruscev}Let $\mu$ be a Borel probability measure on $\mathbb{R}$,
singular relative to Lebesgue measure, and let \[
G(z)=\int_{-\infty}^{\infty}\frac{d\mu(t)}{t-z},\quad z\in\mathbb{C}^{+},\]
denote its Cauchy transform. Then\[
|\{x\in\mathbb{R}:G(x)>y\}|=|\{x\in\mathbb{R}:G(x)<-y\}|=\frac{1}{y}\]
for every $y>0$.
\end{thm}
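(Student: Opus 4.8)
The plan is to deduce the statement from Theorem \ref{thm:letac} by converting the Cauchy transform $G$ into a self-map of $\mathbb{C}^{+}$ of the form (\ref{eq:beta-0}). Since $\mu$ is a probability measure and $G(z)=\int(t-z)^{-1}\,d\mu(t)$ has positive imaginary part on $\mathbb{C}^{+}$, the function
\[
\Phi(z)=-\frac{1}{G(z)}
\]
again maps $\mathbb{C}^{+}$ into itself, because $w\in\mathbb{C}^{+}$ implies $-1/w\in\mathbb{C}^{+}$. First I would record the behaviour of $G$ at infinity: writing $zG(z)=-1+\int t(t-z)^{-1}\,d\mu(t)$ and letting $z=i\eta\to i\infty$, dominated convergence gives $zG(z)\to-1$, so that $G(z)\sim -1/z$. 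Hence $\Phi(i\eta)/(i\eta)\to1$, which means that in the Nevanlinna representation of $\Phi$ the coefficient of $z$ equals $1$; thus $\Phi$ has exactly the form (\ref{eq:beta-0}), say $\Phi(z)=\alpha+z+\int_{-\infty}^{\infty}(1+tz)(t-z)^{-1}\,d\rho(t)$.

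Next I would check that the representing measure $\rho$ is singular. Because $\mu$ is singular, $\operatorname{Im}G(x+i\eta)$, which is $\pi$ times the Poisson integral of $\mu$, tends to $0$ for almost every $x$, so the boundary values $G(x)$ are real and finite almost everywhere; consequently $\Phi(x)=-1/G(x)$ is real almost everywhere, which by the discussion preceding the theorem forces $\rho$ to be singular. Theorem \ref{thm:letac} then applies to $\Phi$ and shows that the boundary map $x\mapsto\Phi(x)$ is measure preserving.

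Finally I would translate the level sets of $G$ into level sets of $\Phi$ using the elementary relation $\Phi(x)=-1/G(x)$. For almost every $x$, where $G(x)$ is real, one has $G(x)>y>0$ if and only if $-1/G(x)\in(-1/y,0)$, and $G(x)<-y$ if and only if $-1/G(x)\in(0,1/y)$; the remaining cases $G(x)\in[0,y]$ and $G(x)\le0$ place $\Phi(x)$ outside the respective intervals, so these identifications are exact. Therefore
\[
\{x:G(x)>y\}=\{x:\Phi(x)\in(-1/y,0)\},\qquad\{x:G(x)<-y\}=\{x:\Phi(x)\in(0,1/y)\}
\]
up to a Lebesgue-null set, and applying the measure-preserving property of $\Phi$ to the Borel sets $(-1/y,0)$ and $(0,1/y)$, each of Lebesgue measure $1/y$, yields both asserted equalities.

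The self-map property and the reduction to the form (\ref{eq:beta-0}) are routine; the step that needs the most care is the normalization of the leading coefficient to $1$, which is precisely where the hypothesis that $\mu$ is a \emph{probability} measure enters, together with the bookkeeping that passage to boundary values commutes with $w\mapsto-1/w$ off the negligible set where $G(x)=0$.
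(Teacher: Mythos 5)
Your proposal is correct. Note, however, that the paper does not actually prove Theorem \ref{thm:hruscev}: it is quoted from Hru\v s\v c\"ev and Vinogradov \cite{hru-vin}, with only the remark that it is ``essentially equivalent'' to Letac's theorem. What you have written is precisely one direction of that equivalence, namely a derivation of Theorem \ref{thm:hruscev} from Theorem \ref{thm:letac} (which the paper also states without proof). The reduction is sound: $G$ maps $\mathbb{C}^{+}$ into $\mathbb{C}^{+}$, the normalization $zG(z)\to-1$ along $z=i\eta$ correctly forces $\beta=1$ in the Nevanlinna representation of $\Phi=-1/G$, the singularity of $\mu$ passes to $\rho$ via the a.e.\ reality of boundary values, and the level-set identifications $\{G>y\}=\{\Phi\in(-1/y,0)\}$ and $\{G<-y\}=\{\Phi\in(0,1/y)\}$ are exact off a null set. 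The two points you flag but do not fully justify deserve one line each: the nontangential boundary values of $G$ exist and are finite a.e.\ because $G$ is of bounded type in $\mathbb{C}^{+}$, and the exceptional set $\{x:G(x)=0\}$ is Lebesgue-null because a nonzero function of bounded type cannot have vanishing boundary values on a set of positive measure. With those remarks added, the argument is complete, and it is arguably the cleanest way to see why the paper can treat Theorems \ref{thm:letac} and \ref{thm:hruscev} as interchangeable.
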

The special case of these results for measures ($\rho$ or $\mu$)
with finite support is due to Boole \cite{boole}. Theorem \ref{thm:hruscev}
has an extension to arbitrary probability measures due to Tsereteli
\cite{tsereteli} and Hru\v s\v c\"ev and Vinogradov \cite{hru-vin}.
To formulate this result, we use the Lebesgue decomposition of a probability
measure $\mu$ on $\mathbb{R}$ as \[
\mu=\mu_{\text{{\rm ac}}}+\mu_{\text{{\rm s}}},\]
where $\mu_{\text{{\rm ac}}}$ is absolutely continuous relative to
Lebesgue measure and $\mu_{\text{{\rm s}}}$ is singular.
\begin{thm}
\label{thm:tsiriteli}Let $\mu$ be a Borel probability measure on
$\mathbb{R}$, and let \[
G(z)=\int_{-\infty}^{\infty}\frac{d\mu(t)}{t-z},\quad z\in\mathbb{C}^{+},\]
denote its Cauchy transform. Then\[
\lim_{y\uparrow\infty}y|\{x\in\mathbb{R}:\Re G(x)>y\}|=\lim_{y\uparrow\infty}y|\{x\in\mathbb{R}:\Re G(x)<-y\}|=\mu_{\text{{\rm s}}}(\mathbb{R}).\]

\end{thm}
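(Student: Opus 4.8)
The plan is to reduce everything to Theorem \ref{thm:hruscev} by peeling off the absolutely continuous part of $\mu$ and showing that it contributes nothing to the asymptotics. Write the Lebesgue decomposition $\mu=\mu_{\mathrm{ac}}+\mu_{\mathrm s}$ with $d\mu_{\mathrm{ac}}=f\,dt$, $f\in L^1(\mathbb R)$, $f\ge0$, and split the Cauchy transform accordingly as $G=G_{\mathrm{ac}}+G_{\mathrm s}$, where $G_{\mathrm{ac}},G_{\mathrm s}$ are the Cauchy transforms of $\mu_{\mathrm{ac}},\mu_{\mathrm s}$. The boundary values exist a.e.; since $\mu_{\mathrm s}$ is singular one has $\Im G_{\mathrm s}(x)=0$ a.e., so $\Re G_{\mathrm s}(x)=G_{\mathrm s}(x)$ a.e., while $\Re G_{\mathrm{ac}}(x)=\operatorname{p.v.}\int \frac{f(t)}{t-x}\,dt$ equals, up to a nonzero constant, the Hilbert transform $Hf$.

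First I would dispose of the singular part. Applying Theorem \ref{thm:hruscev} to the probability measure $\mu_{\mathrm s}/\mu_{\mathrm s}(\mathbb R)$ (the case $\mu_{\mathrm s}(\mathbb R)=0$ being trivial, as then $G_{\mathrm s}\equiv0$) and rescaling yields the exact identity
\[
|\{x:\Re G_{\mathrm s}(x)>y\}|=|\{x:\Re G_{\mathrm s}(x)<-y\}|=\frac{\mu_{\mathrm s}(\mathbb R)}{y},\qquad y>0,
\]
so that $y|\{\Re G_{\mathrm s}>y\}|$ is identically equal to $\mu_{\mathrm s}(\mathbb R)$, and likewise for the lower tail.

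The key step is the following lemma on the absolutely continuous part: for every $f\in L^1(\mathbb R)$,
\[
\lim_{y\uparrow\infty}y\,|\{x:|\Re G_{\mathrm{ac}}(x)|>y\}|=0 .
\]
I would prove this by a density argument. If $g$ is bounded with compact support, then $g\in L^2$ and $Hg\in L^2$ by the Riesz/Plancherel theorem, so $y|\{|\Re G_{\mathrm{ac}}^{g}|>y\}|\le y^{-1}\|\cdot\|_2^2\to0$. Given $f\in L^1$ and $\delta>0$, write $f=g+h$ with $g$ bounded of compact support and $\|h\|_1<\delta$; the classical Kolmogorov weak-type $(1,1)$ estimate bounds $y\,|\{|\Re G_{\mathrm{ac}}^{h}|>y/2\}|$ by a constant multiple of $\|h\|_1<C\delta$, and letting $\delta\downarrow0$ gives the limit $0$. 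This lemma --- the fact that $Hf$ has strictly better than weak-$L^1$ tail behavior --- is the main obstacle, since the bare weak-type estimate yields only $\limsup_y y|\{\cdots\}|<\infty$, not the vanishing of the limit.

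Finally I would combine the two parts by an elementary squeeze. Fix $\varepsilon\in(0,1)$. From $\Re G=\Re G_{\mathrm s}+\Re G_{\mathrm{ac}}$ one has the inclusion $\{\Re G>y\}\subseteq\{\Re G_{\mathrm s}>(1-\varepsilon)y\}\cup\{\Re G_{\mathrm{ac}}>\varepsilon y\}$, which together with the singular identity and the lemma gives $\limsup_y y|\{\Re G>y\}|\le \mu_{\mathrm s}(\mathbb R)/(1-\varepsilon)$; the reverse inclusion $\{\Re G_{\mathrm s}>(1+\varepsilon)y\}\subseteq\{\Re G>y\}\cup\{\Re G_{\mathrm{ac}}\le-\varepsilon y\}$ gives $\liminf_y y|\{\Re G>y\}|\ge \mu_{\mathrm s}(\mathbb R)/(1+\varepsilon)$. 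Letting $\varepsilon\downarrow0$ proves the first equality, and the lower-tail statement follows from the identical argument using the two-sided form of both the singular identity and the lemma.
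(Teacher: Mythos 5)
The paper itself offers no proof of this statement: Theorem \ref{thm:tsiriteli} is quoted as a known result of Tsereteli and Hru\v s\v c\"ev--Vinogradov, with \cite{cima-book} cited for details, so there is no in-paper argument to compare yours against. That said, your proof is correct and is essentially the classical one. The exact identity for the singular part follows from Theorem \ref{thm:hruscev} by normalization just as you say (and the case $\mu_{\mathrm s}=0$ is indeed trivial); the two $\varepsilon$-inclusions in your squeeze are both valid and, combined with the vanishing of $y\,|\{|\Re G_{\mathrm{ac}}|>\varepsilon y\}|$, give matching upper and lower bounds $\mu_{\mathrm s}(\mathbb R)/(1\mp\varepsilon)$. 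You correctly identify the only real analytic content as the lemma that $y\,|\{x:|\Re G_{\mathrm{ac}}(x)|>y\}|\to 0$ for $f\in L^1$, and your density argument for it is airtight: the $L^2$ (Riesz--Plancherel) bound handles a bounded, compactly supported piece, Kolmogorov's weak-type $(1,1)$ estimate handles a remainder of small $L^1$ norm, and the quasi-triangle inequality $|\{|a+b|>y\}|\le|\{|a|>y/2\}|+|\{|b|>y/2\}|$ combines them. The only steps you use without proof are the a.e.\ boundary-value facts ($\Re G_{\mathrm s}=G_{\mathrm s}$ a.e.\ since the Poisson extension of a singular measure vanishes a.e.\ on the boundary, and $\Re G_{\mathrm{ac}}$ is $-\pi$ times the Hilbert transform of $f$) and Theorem \ref{thm:hruscev} itself --- all of which the paper also takes as known --- so the proposal is complete at the level of rigor the paper operates at.
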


We refer to \cite{tamarkin} for the Nevanlinna representation, and to \cite{cima-book} for the results mentioned above.

\section{Operators with Closed Range\label{sec:Operators-with-Closed}}

We use the notations $\Phi,\alpha,\rho,\mu_{\tau}$ introduced in
Section \ref{sec:Preliminaries}. In particular, $\mu_{\tau,\text{{\rm s}}}$
denotes the singular summand of the probability measure $\mu_{\tau}$.
As pointed out in the introduction, the composition operator $C_{\Phi}$
is a contraction. We denote\[
\mathbf{A}=\inf_{u\in H_{\mathbb{C}^{+}}^{2}\setminus\{0\}}\frac{\|C_{\Phi}u\|_{2}^{2}}{\|u\|_{2}^{2}},\]
so that $C_{\Phi}$ has closed range if and only if $\mathbf{A}>0$.
Another constant related with $\Phi$ is defined as\[
\mathbf{B}=\inf_{-\infty<a<b<\infty}\frac{|\{x\in\mathbb{R}:\Phi(x)\in(a,b)\}|}{|b-a|}.\]

Given a finite interval $(a,b)\subset\mathbb{R}$, denote by $D_{a,b}$
the disk with diameter $(a,b)$, and define yet another constant\[
\mathbf{C}=\inf_{-\infty<a<b<\infty}\frac{|\{x\in\mathbb{R}:\Phi(x)\in D_{a,b}\}|}{|b-a|}.\]
 Finally, we set\[
\mathbf{D}=\inf_{\tau\in\mathbb{R}}\mu_{\tau,\text{{\rm s}}}(\mathbb{R}).\]

It may be worth noting that, in the definition of the constants $\mathbf{B}$
and $\mathbf{C}$, the infimum could be taken over intervals $(a,b)$
with length $b-a$ bounded by an arbitrarily small constant. This
is easy to see, and the relevant argument is contained in the proof
of the following result.
\begin{thm}
\label{thm:lower-bound}We have $\mathbf{A}=\mathbf{B}=\mathbf{C}=\mathbf{D}$.\end{thm}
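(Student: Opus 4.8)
The plan is to establish the cycle of inequalities $\mathbf{A}\le\mathbf{B}\le\mathbf{C}\le\mathbf{D}\le\mathbf{A}$. The computational backbone is the identity
\[
\|C_\Phi u\|_2^2=\int_{-\infty}^{\infty}|u(\Phi(x))|^2\,dx=\int|u|^2\,d\nu ,
\]
where $\nu=\Phi_*m$ is the pushforward of Lebesgue measure $m$ under the boundary map $x\mapsto\Phi(x)\in\overline{\mathbb{C}^+}$ (legitimate because $u\circ\Phi\in H^2_{\mathbb{C}^+}$ has boundary values $u(\Phi(x))$). I would split $\nu=\nu_0+\nu_1$, with $\nu_0$ carried by the set where $\Phi(x)\in\mathbb{R}$ and $\nu_1$ by the open half-plane. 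Since $\nu_0(\sigma)=|\{x:\Phi(x)\in\sigma\}|$ for $\sigma\subset\mathbb{R}$, the constant $\mathbf{B}$ is the infimum of the interval-averages of $\nu_0$, which equals $\operatorname{ess\,inf}h$ for the Radon--Nikodym density $h=d\nu_{0,\mathrm{ac}}/dm$.

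This already yields $\mathbf{A}=\mathbf{B}$. Discarding $\nu_1$ and the singular part of $\nu_0$ gives $\|C_\Phi u\|_2^2\ge\int|u|^2h\,dm\ge(\operatorname{ess\,inf}h)\|u\|_2^2$, so $\mathbf{A}\ge\mathbf{B}$; conversely, testing on $u=e^{i\lambda z}v$ (for which $\|u\|_2=\|v\|_2$ since $|e^{i\lambda z}|\equiv1$ on $\mathbb{R}$, while the factor $e^{-2\lambda\operatorname{Im}\Phi(x)}$ suppresses the interior as $\lambda\to\infty$) reduces $\|C_\Phi u\|_2^2$ to $\int|v|^2\,d\nu_0$, and concentrating $v$ at a near-minimal Lebesgue point of $h$ drives the ratio down to $\operatorname{ess\,inf}h$, so $\mathbf{A}\le\mathbf{B}$. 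The inclusion $(a,b)\subset D_{a,b}$ gives $\mathbf{B}\le\mathbf{C}$ at once.

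For the link $\mathbf{C}\le\mathbf{D}$ I would use the elementary identity (obtained by completing the square in $\Re\frac{1}{\tau-\Phi(x)}>y$)
\[
\{x:\Re G_\tau(x)>y\}=\{x:\Phi(x)\in D_{\tau-1/y,\,\tau}\},
\]
under which every disk $D_{a,b}$ corresponds to $(\tau,y)=(b,1/(b-a))$ and its defining ratio becomes $y\,|\{x:\Re G_\tau(x)>y\}|$. By Theorem~\ref{thm:tsiriteli} this tends to $\mu_{\tau,\mathrm{s}}(\mathbb{R})$ as $y\to\infty$, and since an infimum is at most a limit, $\mathbf{C}=\inf_{\tau,y}y\,|\{\Re G_\tau>y\}|\le\inf_\tau\mu_{\tau,\mathrm{s}}(\mathbb{R})=\mathbf{D}$.

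The remaining inequality $\mathbf{D}\le\mathbf{A}=\mathbf{B}$ is where I expect the real work. By the previous paragraph it amounts to showing that enlarging an interval to its disk does not lower the infimal ratio, i.e. that for short intervals nearly realizing $\mathbf{B}$ the excess $|\{x:\Phi(x)\in D_{a,b},\ \operatorname{Im}\Phi(x)>0\}|$ coming from the nonreal boundary values is $o(b-a)$; equivalently, that the singular-continuous mass of $\mu_\tau$ cannot push the infimum above the atomic density $h$. To carry this out I would first reduce, using the additivity of $\sigma\mapsto|\Phi^{-1}(\sigma)|$ over partitions, to disks of arbitrarily small diameter (so that only the regime $y\to\infty$ is relevant), and then combine Theorem~\ref{thm:tsiriteli} with the exact identity of Theorem~\ref{thm:hruscev} for the singular summand to force this interior contribution to vanish for almost every $\tau$. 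Establishing this almost-everywhere vanishing of the interior boundary-density---that is, controlling the boundary behaviour of the nonreal part of $\Phi$---is the main obstacle; once it is in hand, the cycle closes and all four constants coincide.
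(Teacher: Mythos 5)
Your chain $\mathbf{A}\le\mathbf{B}\le\mathbf{C}\le\mathbf{D}\le\mathbf{A}$ is sound in three of its four links, and two of them are handled by arguments genuinely different from the paper's: your proof of $\mathbf{A}=\mathbf{B}$ via the pushforward $\nu=\nu_0+\nu_1$ and the inner factor $e^{i\lambda z}$ (which kills the interior mass as $\lambda\to\infty$) is cleaner than the paper's test functions $\exp\bigl(-ic\log\frac{z-b}{z-a}\bigr)/(z+i)$, which are instead tuned to the disk $D_{a,b}$ and give $\mathbf{A}\le\mathbf{C}$ directly; and your completion-of-the-square identity $\{\Re G_\tau>y\}=\{\Phi\in D_{\tau-1/y,\tau}\}$ together with ``infimum $\le$ limit'' is exactly the paper's route to $\mathbf{C}\le\mathbf{D}$. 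But the link $\mathbf{D}\le\mathbf{A}$, which you explicitly leave as ``the main obstacle,'' is a genuine gap, and the plan you sketch for it (forcing the interior contribution $|\{x:\Phi(x)\in D_{a,b},\,\Im\Phi(x)>0\}|$ to be $o(b-a)$ for almost every $\tau$ via Theorems \ref{thm:hruscev} and \ref{thm:tsiriteli}) aims at the wrong target: for a given near-optimal interval that excess need not be small at all, and no almost-everywhere statement in $\tau$ is what is needed.

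The paper closes the cycle with two elementary but essential devices you are missing. First, for $\mathbf{C}\le\mathbf{B}$: since $\int_{\mathbb{R}}(1+|\Phi(x)|^2)^{-1}dx\le\pi$, the sets $\{x:\Re\Phi(x)\in(a,b),\,\Im\Phi(x)<\delta\}$ have finite measure, so by continuity of measure they shrink, as $\delta\downarrow0$, to $\{x:\Phi(x)\in(a,b)\}$; choosing $\delta$ accordingly and then subdividing $(a,b)$ into $N$ subintervals of length $<\delta$, each sub-disk $D_{a_j,b_j}$ lies in the strip $\{\Re w\in(a_j,b_j),\,0\le\Im w<\delta\}$, these strips are disjoint, and a pigeonhole over them produces one sub-disk with preimage measure $<(\mathbf{B}+\varepsilon)(b_j-a_j)$. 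Second, for $\mathbf{D}\le\mathbf{C}$: it is not enough to know that near-optimal disks can be taken small (your ``only the regime $y\to\infty$ is relevant''), because the optimizing pairs $(\tau,y)$ could use different $\tau$'s for each $y$; the paper fixes this by a nested-halving construction that produces a \emph{single} $\tau$ and a sequence $y_n\uparrow\infty$ with $y_n|\{\Re G_\tau<-y_n\}|<\mathbf{C}+\varepsilon$, and then invokes the fact that the limit in Theorem \ref{thm:tsiriteli} \emph{exists} (not merely a $\liminf$) to conclude $\mu_{\tau,\mathrm{s}}(\mathbb{R})\le\mathbf{C}+\varepsilon$. Without these two steps your proof does not close.
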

\begin{proof}
Fix for the moment a finite interval $(a,b)$, and consider the function\[
\log\frac{z-b}{z-a},\quad z\in\mathbb{C}^{+},\]
where the principal branch of the logarithm is used, so that its imaginary
part is in $(0,\pi)$. In fact, \[
\Im\log\frac{z-b}{z-a}=\theta_{z},\]
where $\theta_{z}$ is the angle at $z$ subtended by $(a,b)$. It
follows that for $c>0$, the function\[
\exp\left(-ic\log\frac{z-b}{z-a}\right)\]
is bounded on $\mathbb{C}^{+}$. More precisely,\[
\left|\exp\left(-ic\log\frac{z-b}{z-a}\right)\right|\le\begin{cases}
\exp(\pi c) & \text{for }z\in\mathbb{C}^{+}\cap D_{a,b},\\
\exp(\pi c/2) & \text{for }z\in\mathbb{C}^{+}\setminus D_{a,b}.\end{cases}\]
It follows that the function\[
u_{c}(z)=\frac{\exp\left(-ic\log\frac{z-b}{z-a}\right)}{z+i},\quad z\in\mathbb{C}^{+},\]
belongs to $H_{\mathbb{C}^{+}}^{2}$. We have \[
\frac{\|C_{\Phi}u_{c}\|_{2}^{2}}{\|u_{c}\|_{2}^{2}}\le\frac{\int_{\Phi(x)\in D_{a,b}}\frac{\exp(2\pi c)}{1+|\Phi(x)|^{2}}\, dx+\int_{\Phi(x)\notin D_{a,b}}\frac{\exp(\pi c)}{1+|\Phi(x)|^{2}}\, dx}{\int_{(a,b)}\frac{\exp(2\pi c)}{1+x^{2}}\, dx+\int_{\mathbb{R}\setminus(a,b)}\frac{1}{1+x^{2}}\, dx},\]
and\[
\int_{\Phi(x)\notin(a,b)}\frac{1}{1+|\Phi(x)|^{2}}\, dx\le\int_{-\infty}^{\infty}\frac{1}{1+|\Phi(x)|^{2}}\, dx\le\int_{-\infty}^{\infty}\frac{1}{1+x^{2}}\, dx=\pi\]
because $C_{\Phi}$ is a contraction. Therefore\[
\frac{\|C_{\Phi}u_{c}\|_{2}^{2}}{\|u_{c}\|_{2}^{2}}\le\frac{\int_{\Phi(x)\in D_{a,b}}\frac{1}{1+|\Phi(x)|^{2}}\, dx+\pi\exp(-\pi c)}{\int_{(a,b)}\frac{1}{1+x^{2}}\, dx},\]
and letting $c$ tend to $+\infty$ we obtain\begin{eqnarray}
\mathbf{A} & \le & \frac{\int_{\Phi(x)\in D_{a,b}}\frac{1}{1+|\Phi(x)|^{2}}\, dx}{\int_{(a,b)}\frac{1}{1+x^{2}}\, dx}\nonumber \\
 & \le & \frac{\max\{1+t^{2}:t\in(a,b)\}}{\min\{1+|z|^{2}:z\in D_{a,b}\}}\frac{|\{x\in\mathbb{R}:F(x)\in D_{a,b}\}|}{b-a}.\label{eq:Aless}\end{eqnarray}
This inequality can now be improved to show that $\mathbf{A}\le\mathbf{C}$.
Indeed, divide the interval $(a,b)$ into two equal subintervals.
For one of these intervals, say $(a',b')$, we have\begin{equation}
\frac{|\{x\in\mathbb{R}:F(x)\in D_{a',b'}\}|}{b'-a'}\le\frac{|\{x\in\mathbb{R}:F(x)\in D_{a,b}\}|}{b-a}\label{eq:half-a-b}\end{equation}
so that inequality (\ref{eq:Aless}) applied to $(a',b')$, combined
with (\ref{eq:half-a-b}) yields\[
\mathbf{A}\le\frac{\max\{1+t^{2}:t\in(a',b')\}}{\min\{1+|z|^{2}:z\in D_{a',b'}\}}\frac{|\{x\in\mathbb{R}:\Phi(x)\in D_{a,b}\}|}{b-a}.\]
Repeating this operation, the first fraction can be made arbitrarily
close to 1, thus yielding $\mathbf{A}\le\mathbf{C}$.

Next we observe that $\{x\in\mathbb{R}:\Phi(x)\in(a,b)\}\subset\{x\in\mathbb{R}:\Phi(x)\in D_{a,b}\}$,
and this yields the inequality $\mathbf{B}\le\mathbf{C}$. On the
other hand, let $\varepsilon$ be an arbitrary positive number, and
choose $(a,b)$ so that \[
|\{x\in\mathbb{R}:\Phi(x)\in(a,b)\}|<(\mathbf{B}+\varepsilon)(b-a).\]
 There is then $\delta>0$ such that\[
|\{x\in\mathbb{R}:\Re\Phi(x)\in(a,b),\Im\Phi(x)<\delta\}|<(\mathbf{B}+\varepsilon)(b-a).\]
Divide now $(a,b)$ into $N$ equal parts, all of them of length $<\delta$,
and choose one of these intervals, say $(a',b')$, such that $|\{x\in\mathbb{R}:\Phi(x)\in D_{a',b'}\}|$
is the smallest. Then\begin{eqnarray*}
|\{x\in\mathbb{R}\colon\Phi(x)\in D_{a',b'}\}| & \le & \frac{1}{N}|\{x\in\mathbb{R}:\Re\Phi(x)\in(a,b),\Im\Phi(x)<\delta\}|\\
 & < & (\mathbf{B}+\varepsilon)(b'-a'),\end{eqnarray*}
so that $\mathbf{C}\le\mathbf{B}+\varepsilon$, and this yields $\mathbf{C}\le\mathbf{B}$
as $\varepsilon\to0$.

The inequality $\mathbf{B}\le\mathbf{A}$ follows immediately if we
prove that\[
\int_{F(x)\in\mathbb{R}}v(\Phi(x))\, dx\ge\mathbf{B}\int_{\mathbb{R}}v(x)\, dx\]
for an arbitrary positive integrable function $v$. This is trivially
verified when $v$ is a linear combination with positive coefficients
of functions of the form $\chi_{(a,b)}$, and the general case follows
by standard approximation procedures.

It remains to show that the constant $\mathbf{A}=\mathbf{B}=\mathbf{C}$
also equals $\mathbf{D}$, and this is where we use Theorem \ref{thm:tsiriteli}.
Fix $\varepsilon>0$, and choose $\tau\in\mathbb{R}$ such that\[
\mu_{\tau,\text{s}}(\mathbb{R})<\mathbf{D}+\varepsilon.\]
Theorem \ref{thm:tsiriteli} yields a positive number $y$ such that\[
|\{x\in\mathbb{R}:\Re G_{\tau}(x)<-y\}|<\frac{\mathbf{D}+\varepsilon}{y}.\]
Equivalently,\[
|\{x\in\mathbb{R}:\Phi(x)\in D_{a,b}\}|<(\mathbf{D}+\varepsilon)(b-a),\]
where $a=\tau$ and $b-a=1/y$. Letting $\varepsilon\to0$ we obtain
$\mathbf{C}\le\mathbf{D}$. For the opposite inequality, choose an
interval $(a_{0},b_{0})$ such that\[
|\{x\in\mathbb{R}:\Phi(x)\in D_{a_{0},b_{0}}\}|<(\mathbf{C}+\varepsilon)(b_{0}-a_{0}).\]
We construct intervals intervals $(a_{n},b_{n})$ such that each of
them is one half of $(a_{n-1},b_{n-1})$ and\[
|\{x\in\mathbb{R}:\Phi(x)\in D_{a_{n},b_{n}}\}|<(\mathbf{C}+\varepsilon)(b_{n}-a_{n})\]
 for every $n\ge1$. Denote by $\tau$ the common limit of the sequences
$(a_{n})_{n=0}^{\infty}$ and $(b_{n})_{n=0}^{\infty}$. Note that
$\tau\in[a_{n},b_{n}]$ for every $n$, so that $\tau$ divides this
interval into at most two subintervals one of which, say $(a'_{n},b'_{n})$
must also satisfy\[
|\{x\in\mathbb{R}:\Phi(x)\in D_{a'_{n},b'_{n}}\}|<(\mathbf{C}+\varepsilon)(b'_{n}-a'_{n}).\]
One of the sets $\{n:a'_{n}=\tau\}$ and $\{n:b'_{n}=\tau\}$ must
be infinite. For definiteness, assume that the first one is infinite,
so passing to a subsequence and relabeling, we obtain a sequence of
numbers $y_{n}\uparrow\infty$ such that\[
|\{x\in\mathbb{R}:\Phi(x)\in D_{\tau,\tau+1/y_{n}}\}|<\frac{\mathbf{C}+\varepsilon}{y_{n}},\quad n\ge1.\]
This is then equivalent to\[
|\{x\in\mathbb{R}:\Re G_{\tau}(x)<-y_{n}\}|<\frac{\mathbf{C}+\varepsilon}{y_{n}},\]
and this implies that $\mu_{\tau,\text{s}}(\mathbb{R})<\mathbf{C}+\varepsilon.$
The desired inequality $\mathbf{D}\le\mathbf{C}$ follows again by
letting $\varepsilon\to0$.
\end{proof}
\begin{example}
Here are a few illustrations of the preceding result. 
\begin{enumerate}
\item The function $\Phi(z)=\sqrt{z^{2}-1}$ has the property that $G_{0}(z)=-1/\sqrt{z^{2}-1}$
is represented by an absolutely continuous probability measure, namely
$dt/\pi\sqrt{1-t^{2}}$ on $(-1,1)$. It follows that $C_{\Phi}$
does not have closed range. 
\item For the function $\Phi(z)=z+\log z$, the measure $\mu_{\tau}$ has
a singular part for every $\tau\in\mathbb{R}$. More precisely, if
$\tau=t+\log t$, then singular part of $\mu_{\tau}$ consists of
a single atom at $t$ with mass $t/(1+t)$. These masses tend to zero
as $t\to0$, hence $C_{\Phi}$ does not have closed range. 
\item Consider now $\Phi(z)=z+\log((z-1)/(z+1))$. For this function, $\mu_{\tau,\text{s}}$
is supported by two points, and it is fairly easy to verify that its
mass is bounded away from zero. Thus $C_{\Phi}$ does have closed
range.
\end{enumerate}
\end{example}

\section{Similarity to an Isometry\label{sec:Examples}}

In this section we consider functions \[
\Phi(z)=\alpha+z+\int_{-\infty}^{\infty}\frac{1+tz}{t-z}\, d\rho(t),\quad z\in\mathbb{C}^{+},\]
such that $C_{\Phi}$ is similar to an isometry. Using the fact that
$C_{\Phi}$ is a contraction, a result of B. Sz.-Nagy \cite{sz-nagy},
implies that $C_{\Phi}$ is similar to an isometry if and only if\[
\inf_{n\ge1}\inf_{u\in H_{\mathbb{C}^{+}}^{2}\setminus\{0\}}\frac{\|C_{\Phi}^{n}u\|_{2}^{2}}{\|u\|_{2}^{2}}>0.\]
Denote by $\Phi_{n}$ the composition of $n$ copies of $\Phi$. Theorem
\ref{thm:lower-bound} shows that this is equivalent to\begin{equation}
\inf_{n\ge1}\inf_{-\infty<a<b<\infty}\frac{|\{x\in\mathbb{R}:\Phi_{n}(x)\in(a,b)\}|}{b-a}>0.\label{eq:uniform-below}\end{equation}
 
\begin{thm}
\label{thm:similarity}With the above notation, assume that $\rho$
is supported on a finite interval $[c,d]$ and\[
\lim_{x\uparrow c}\Phi(x)>\lim_{x\downarrow d}\Phi(x).\]
Then $C_{\Phi}$ is similar to an isometry provided that $|\alpha|$
is sufficiently large.\end{thm}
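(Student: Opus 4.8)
The plan is to verify the criterion \eqref{eq:uniform-below}, which by Theorem \ref{thm:lower-bound} applied to each iterate $\Phi_{n}$ (again an analytic self-map of $\mathbb{C}^{+}$ of the form \eqref{eq:beta-0} with leading coefficient $\beta=1$, so that $C_{\Phi_{n}}=C_{\Phi}^{n}$ and $\mathbf{A}_{n}=\mathbf{B}_{n}$) amounts to producing a constant $C^{*}$, independent of $n$, with
\[
|\{x\in\mathbb{R}:\Phi_{n}(x)\in(a,b)\}|\ge\frac{b-a}{C^{*}}
\]
for every interval $(a,b)$; by the remark preceding Theorem \ref{thm:lower-bound} it suffices to treat arbitrarily short intervals. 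Write $\Phi(x)=\alpha+x+g(x)$ with $g(x)=\int\frac{1+tx}{t-x}\,d\rho(t)$. Since $\rho$ is carried by $[c,d]$, on each of the two components $(-\infty,c)$ and $(d,\infty)$ the function $g$ is real and bounded (say $|g|\le G_{0}$), and $\Phi$ is a strictly increasing real-analytic diffeomorphism, because $\Phi'(x)=1+\int\frac{1+t^{2}}{(t-x)^{2}}\,d\rho(t)\ge1$; moreover $\Phi'(x)-1\le K_{0}/\operatorname{dist}(x,[c,d])^{2}$ with $K_{0}=\int(1+t^{2})\,d\rho$. Denote the two branches $\Phi_{L}\colon(-\infty,c)\to(-\infty,L_{c})$ and $\Phi_{R}\colon(d,\infty)\to(L_{d},\infty)$, where $L_{c}=\lim_{x\uparrow c}\Phi(x)$ and $L_{d}=\lim_{x\downarrow d}\Phi(x)$; the hypothesis is $\ell:=L_{c}-L_{d}>0$. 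I assume $\alpha\to+\infty$, the case $\alpha\to-\infty$ being symmetric under $x\mapsto-x$. For $\alpha>G_{0}$ both branches satisfy $\Phi(x)-x=\alpha+g(x)\ge\alpha-G_{0}>0$, so each inverse branch decreases its argument by at least $\alpha-G_{0}$.

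For a short interval $(a,b)$ I will estimate the left-hand side from below by the length of a single real inverse image of $(a,b)$ under $\Phi_{n}$. Concretely, if $\sigma=(\sigma_{1},\dots,\sigma_{n})$ is a sequence of choices ``left/right'' of inverse branches that is admissible for every point of $(a,b)$, it determines a real map $\Theta_{\sigma}$ on $(a,b)$ with $\Phi_{n}\circ\Theta_{\sigma}=\mathrm{id}$, and then $|\{x:\Phi_{n}(x)\in(a,b)\}|\ge\int_{a}^{b}(\Phi_{n}'(\Theta_{\sigma}(\tau)))^{-1}\,d\tau$. By the chain rule $\Phi_{n}'(\Theta_{\sigma}(\tau))=\prod_{j}\Phi'(\tau_{j})$, the product being taken over the backward orbit $\tau=\tau_{0},\tau_{1},\dots,\tau_{n}$ determined by $\sigma$. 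Thus everything reduces to exhibiting, for each target point, one admissible backward orbit along which $\prod_{j}\Phi'(\tau_{j})\le C^{*}$ uniformly in $n$.

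The orbit is built in three stages. First, while the current point is $\ge L_{c}$ I apply the right inverse branch; these output points lie in $(d,\infty)$ at distance $\ge L_{c}-d\approx\alpha$ from $[c,d]$ and are spaced by $\ge\alpha-G_{0}$, so $\sum_{j}(\Phi'(\tau_{j})-1)\le\frac{K_{0}}{(\alpha-G_{0})^{2}}\cdot\frac{\pi^{2}}{6}$, and this stage contributes a factor that is bounded (indeed close to $1$) uniformly in $n$. The transition stage uses the overlap: once the current point $w$ satisfies $w<L_{c}$, if $w\le L_{c}-\ell/2$ I step left, landing at $\Phi_{L}^{-1}(w)\le\Phi_{L}^{-1}(L_{c}-\ell/2)=:\xi_{0}<c$, where $\Phi'$ is bounded by a fixed $K_{5}$; if instead $w\in(L_{c}-\ell/2,L_{c})$, then $w>L_{d}$ and I first step right, landing in the fixed compact subinterval $\Phi_{R}^{-1}((L_{c}-\ell/2,L_{c}))\Subset(d,\infty)$ (its position is $\alpha$-independent because $\Phi-\alpha$ is), where $\Phi'$ is bounded by a fixed $K_{4}$, and whose image is $\le d+\ell\le L_{c}-\ell/2$, reducing to the previous case. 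In either case, after at most two further left steps the orbit reaches a point $\le\xi_{0}-(\alpha-G_{0})\ll c$. Finally, from there I apply the left inverse branch indefinitely: the points march off to $-\infty$ with $\operatorname{dist}(\tau_{j},[c,d])\gtrsim(\alpha-G_{0})\,|j|$, so the remaining factors satisfy $\sum(\Phi'(\tau_{j})-1)=O(1/\alpha^{2})$ and contribute a bounded tail factor. Multiplying the three uniformly bounded contributions gives $\prod_{j}\Phi'(\tau_{j})\le C^{*}$ with $C^{*}$ depending only on $\Phi$, as required.

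The crux of the argument---and the step I expect to be the main obstacle---is precisely this uniform control of the distortion product $\prod_{j}\Phi'(\tau_{j})$ over all $n$. Two effects must be reconciled. The compounding over the (arbitrarily many) iterates is tamed by taking $|\alpha|$ large: the forward map is essentially a translation by $\alpha$, so backward orbits escape to infinity at a linear rate and the excess $\sum(\Phi'-1)$ becomes a convergent series of size $O(1/\alpha^{2})$, uniformly in $n$. The finitely many potentially large factors, which occur when an orbit must pass near a branch value $L_{c}$ or $L_{d}$ (where $\Phi'$ may be unbounded), are defeated by the hypothesis $L_{c}>L_{d}$: the overlap guarantees that near one branch value the other branch furnishes a preimage sitting at a fixed location, bounded away from its own branch point, hence with a bounded derivative. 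It is the cooperation of these two mechanisms---large $|\alpha|$ for the tail and the overlap for the head---that yields the $n$-independent bound and, through \eqref{eq:uniform-below} and Sz.-Nagy's theorem, the similarity of $C_{\Phi}$ to an isometry.
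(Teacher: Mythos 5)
Your proposal follows essentially the same route as the paper: reduce to the uniform lower bound (\ref{eq:uniform-below}) via Sz.-Nagy's theorem and Theorem \ref{thm:lower-bound}, build a real backward orbit out of the two increasing branches of $\Phi$ on $(-\infty,c)$ and $(d,\infty)$, use the overlap $\lim_{x\uparrow c}\Phi(x)>\lim_{x\downarrow d}\Phi(x)$ to hand the orbit from one branch to the other away from the singular endpoints, and control the distortion product $\prod_{j}\Phi'(t_{j})$ by $\Phi'-1\le k/\operatorname{dist}(\cdot,[c,d])^{2}$ together with the linear escape rate that large $|\alpha|$ forces. The one inaccuracy is the claim that $|g|\le G_{0}$ on all of $(-\infty,c)\cup(d,\infty)$: this fails whenever $\lim_{x\uparrow c}\Phi(x)=+\infty$ or $\lim_{x\downarrow d}\Phi(x)=-\infty$ (as in the paper's own example $\Phi(z)=\alpha+z+\log\frac{z-1}{z+1}$), so your thresholds $\alpha>G_{0}$ and $L_{c}-\ell/2$ should be replaced by the paper's device of fixing $c_{1}<c<d<d_{1}$ with $\Phi(c_{1})=\Phi(d_{1})$ and restricting the branches to $(-\infty,c_{1}]$ and $[d_{1},\infty)$, after which your transition step and distortion estimate go through verbatim.
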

\begin{proof}
We only consider positive constants $\alpha$. Negative values are
treated similarly. The function $\Phi(x)$ is increasing on the intervals
$(-\infty,c)$ and $(d,+\infty),$and\[
\lim_{|x|\to\infty}\frac{\Phi(x)}{x}=1.\]
Choose points $c_{1}<c$ and $d_{1}>d$ such that $\Phi(c_{1})=\Phi(d_{1}).$
If $\alpha$ is sufficiently large, we have $\Phi(d_{1})>d_{1}$.
Setting $\eta=\Phi(d_{1})-d_{1}$, the following inequality\[
\Phi(x)\ge x+\eta,\quad x\in\mathbb{R}\setminus(c_{1},d_{1})\]
is satisfied, as can be seen by observing that $\Phi$ is convex on
$(-\infty,c)$ and concave on $(d,+\infty)$. We show that this is
sufficient to insure that $C_{\Phi}$ is similar to an isometry. Indeed,
given any point $t\in\mathbb{R}$, we can find a sequence $(t_{n})_{n=1}^{\infty}\subset\mathbb{R}\setminus(c_{1},d_{1})$
with the property that $\Phi(t_{1})=t,$ $\Phi(t_{n+1})=t_{n}$ for
$n\ge1$, and $t_{n}-t_{n+1}\ge\eta$ for all $n$ with one possible
exception when $t_{n}\ge d_{1}$ and $t_{n+1}\le c_{1}$. Note that
\[
\Phi'(x)-1=\int_{c}^{d}\frac{1+t^{2}}{(t-x)^{2}}\, dt,\quad x\in\mathbb{R}\setminus[c,d],\]
so that\[
\Phi'(x)-1\le\frac{k}{\text{dist}(x,[c,d])^{2}}\]
for some $c>0$. It follows that\[
\prod_{n=1}^{\infty}\Phi'(t_{n})\le\prod_{n=0}^{\infty}\left(1+\frac{k}{\text{dist}(d_{1}+n\eta,[c,d])^{2}}\right)\left(1+\frac{k}{\text{dist}(c_{1}-n\eta,[c,d])^{2}}\right).\]
The last product is finite, and this easily implies the inequality
(\ref{eq:uniform-below}).
\end{proof}
As mentioned in the introduction, there are functions of the form
\[
\Phi(z)=\alpha+z+\int_{c}^{d}\frac{1+tz}{t-z}\, d\rho(t),\quad z\in\mathbb{C}^{+},
\]
which satisfy the hypothesis of Theorem \ref{thm:similarity}, and such that $\rho$ is absolutely continuous relative to Lebesgue measure. Such an example is provided by\[
\Phi(z)=\alpha+z+\log\frac{z-1}{z+1},\quad z\in\mathbb{C}^{+},\]
for which $c=-1$, $d=1$, and  $\rho=dt/\pi(1+t^2)$ on $[-1,1]$.
For this particular case, $C_{\Phi}$ is similar to an isometry whenever
$\alpha\ne0$. Indeed, when $\alpha>0$, one can use the fact that
this function has infinite limit at $-1$ to choose $c_{1}$ and $d_{1}$
so that $\Phi(d_{1})>d_{1}$. (When $\alpha<0$, the corresponding
condition is $\Phi(c_{1})<c_{1}$.)

Theorem \ref{thm:similarity} does not cover all cases in which $C_{\Phi}$
is similar to an isometry. There are many examples where the support
of $\rho$ is not compact, including for instance the case
of singular measures $\rho$.

The inequality required in the statement is an essential hypothesis,
as shown by the function $\Phi(z)=\sqrt{z^{2}-1}$ for which $\Phi(-1)=\Phi(1)=0$
and $C_{\alpha+\Phi}$ does not even have closed range for any $\alpha\in\mathbb{R}$. 

The condition $\Phi(d_{1})>d_{1}$ which appears in the argument is
essential in determining the values of $\alpha$ for which this proof
works. Note that if $\Phi(d_{1})\le d_{1}$ then $\Phi$
has a repelling fixed point $x_{0}\ge d_{1}$, and this makes the
verification of condition (\ref{eq:uniform-below}) more difficult.
Indeed, one must rely on other real preimages under $\Phi$ which
the point $x_{0}$ might have.

The function \[
\Phi(z)=\alpha+\sqrt{z^{2}-1}+\frac{1}{1-z},\quad z\in\mathbb{C}^{+},\]
satisfies the hypothesis of the theorem. In this example, $\Phi$
always has a repelling fixed point $x_{0}>1$ when $\alpha>0$, and
that fixed point has no other real preimages if $\alpha$ is too small.
The function $\Phi$ also has a repelling fixed point $x_{0}\le-1$
for $\alpha\in[-3/2,0)$, but then it is possible to choose $c_{1}<x_{0}$
and carry out the argument.

\end{document}